\newtheorem{theorem}{Theorem}
\newtheorem{lemma}{Lemma}
\begin{document}
\author{{\bf  E.\,V.~Shchepin \thanks{Steklov Math. Institute, scepin@mi-ras.ru} ,
 E. Yu. Mychka \thanks{Lomonosov Moscow State University, mychkaevg@mail.ru }}}

\title{On lower estimations of square-linear ratio for plane Peano curves.}

\date{February, 2021}

%
 \maketitle

\maketitle

\begin{abstract}
It is proved that for any mapping of a unit segment to a unit square, there is a pair of points of the segment for which the
square of the Euclidean distance between their images exceeds the distance between them on the segment by at least  $3\frac58$ times.
And the additional condition that the images of the beginning and end of the segment belong to opposite sides of the square increases the estimate to $4+\varepsilon$.
 \end{abstract}

 In the article \cite{Sch2004} the problem of finding the smallest number of $\kappa$ for which there
exists a mapping $p(t)$ of a unit segment to a unit square such that for any pair of distinct points
the inequality is satisfied for the segment
 \begin{equation}\label{sq-lin1}
   \frac{|p(t_1)-p(t_2)|^2}{t_1-t_2}\le \kappa
 \end{equation}
The numerator of the fraction in this inequality is the square of the Euclidean distance between the points $p (t_i)$,
and the fraction itself is called the square-linear ratio of a pair of points on the curve.
In the same paper, it was proved that $\kappa\ge 3.5$. Essentially the same result was somewhat earlier
obtained in the paper \cite{Nie} in discrete form, that is, concerned with the linear ordering of two-dimensional lattices.
In the last paper, also in discrete form, it is essentially proved that $\kappa\le4$. In continuous form,
the proof of the latter inequality can be found, for example, in the paper \cite{Sch2020}.

The square-linear ratio (abbreviated SLR) is defined for a plane curve $p (t)$
as the supremum of the square-linear ratios of all possible pairs of its points and is denoted by $\kappa (p)$.
The main result of this paper is to prove the inequality $\kappa(p)\ge3\frac{5}{8}$ for
any curve $p(t)$ that maps a segment to a square, whose area is numerically equal to the length of the segment.
In addition, we also consider the question of the lower SLR estimate for maps with fixed origin and
the end.
It is proved that if the ends of the curve described above also belong to opposite sides of the square,
then her SLR will be strictly greater than 4.
 This allows us to improve the result from \cite{Hav2} (Theorem 4),
also giving a lower bound of 4 for fractal maps, but in the form of a non-strict inequality.

\begin{lemma}\label{chain-var}Let $f\colon [a,b]\to\mathbb R^2$ be a continuous mapping of the segment to the plane. And let us give the sequence $t_0<t_1<\dots<t_{n-1}<t_n$ of the points of the segment. Then the inequality is satisfied
\begin{equation}\label{chain-var1}
\max_{i\le n} \frac {|f(t_i)-f(t_{i-1})|^2}{t_{i}-t_{i-1}}\ge \frac {\sum\limits_{i=1}^{n}|f(t_i)-f(t_{i-1})|^2}{t_n-t_{0}},
\end{equation}
 which turns into equality only if, for any $i$, the equality is satisfied
\begin{equation}\label{chain-var2}
\max_{i\le n}  \frac{|f(t_i)-f(t_{i-1})|^2}{t_{i}-t_{i-1}}= \frac{|f(t_i)-f(t_{i-1})|^2}{t_{i}-t_{i-1}}
\end{equation}
\end{lemma}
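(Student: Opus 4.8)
The inequality is purely arithmetic: the Euclidean structure and continuity of $f$ play no role beyond guaranteeing that the numerators are well-defined and nonnegative. The plan is to abbreviate $a_i:=|f(t_i)-f(t_{i-1})|^2\ge 0$ and $b_i:=t_i-t_{i-1}>0$ for $i=1,\dots,n$, and to observe that $\sum_{i=1}^n b_i=t_n-t_0$, since the sum telescopes. Then (\ref{chain-var1}) becomes the classical mediant-type inequality
\begin{equation*}
\max_{i\le n}\frac{a_i}{b_i}\ \ge\ \frac{\sum_{i=1}^n a_i}{\sum_{i=1}^n b_i}.
\end{equation*}

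First I would set $M:=\max_{i\le n} a_i/b_i$. By definition of the maximum, $a_i\le M b_i$ for every $i$, and this is an equality precisely for those indices where the ratio attains its maximum. Summing these $n$ inequalities gives $\sum_i a_i\le M\sum_i b_i=M(t_n-t_0)$, and dividing by $t_n-t_0>0$ yields exactly (\ref{chain-var1}).

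For the equality case, I would note that the summed inequality $\sum_i a_i\le M\sum_i b_i$ is an equality if and only if each of the $n$ summands satisfies $a_i=M b_i$, i.e. $a_i/b_i=M$ for every $i$ (here one uses $b_i>0$ to divide). This is precisely condition (\ref{chain-var2}), which completes the proof. There is no real obstacle here; the only point requiring a moment's care is the equality analysis, where one must invoke the strict positivity of each $b_i=t_i-t_{i-1}$ to pass from $a_i=Mb_i$ back to the ratio form.
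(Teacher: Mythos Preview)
Your argument is correct: the reduction to the mediant-type inequality $\max_i a_i/b_i \ge (\sum_i a_i)/(\sum_i b_i)$ with $a_i\ge 0$, $b_i>0$, together with the equality analysis via $a_i\le M b_i$, is exactly what is needed. The paper itself leaves this lemma to the reader, and your proof is the standard one that fills this gap.
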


\begin{proof}
Left to the reader.
\end{proof}

\begin{lemma}\label{circle} Let $f\colon [a,b]\to\mathbb R^2$ is a continuous mapping of the line segment in the plane for which the maximum square-linear ratio is achieved on a pair of end points $p(a) p(b)$. Then the image $f(I)$ is contained in the circle, the diameter of which is a pair of images of the endpoints of the interval, and for every $t$, with the image of $p(t)$ that belong to the boundary of a circle, a square-linear ratios of pairs $p(a),p(t)$ and $p(t),p(b)$ be the maximum.
\end{lemma}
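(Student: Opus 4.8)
The plan is to derive both conclusions from the classical Thales characterization of the disk built on a given diameter: a point $P$ lies in the closed disk with diameter $[A,B]$ precisely when $(P-A)\cdot(P-B)\le 0$, and $P$ lies on the bounding circle exactly when this inner product vanishes. So everything reduces to controlling the sign of $(f(t)-f(a))\cdot(f(t)-f(b))$.

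First I would fix $t\in(a,b)$ and abbreviate $A=f(a)$, $B=f(b)$, $P=f(t)$, $s=t-a$, $u=b-t$, and $\kappa=|B-A|^2/(b-a)$, the value of the square-linear ratio on the endpoint pair, which by hypothesis dominates the ratio of every other pair. In particular $|P-A|^2\le \kappa s$ and $|P-B|^2\le\kappa u$, and adding these gives $|P-A|^2+|P-B|^2\le\kappa(s+u)=\kappa(b-a)=|B-A|^2$. On the other hand, since $B-A=(P-A)-(P-B)$, expanding the square yields the identity $|B-A|^2=|P-A|^2+|P-B|^2-2(P-A)\cdot(P-B)$. Comparing the two relations forces $(P-A)\cdot(P-B)\le 0$, i.e. $f(t)$ lies in the closed disk with diameter $[f(a),f(b)]$; since $t$ was arbitrary this proves the first assertion. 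One may equivalently obtain the inequality $|P-A|^2+|P-B|^2\le|B-A|^2$ by applying Lemma~\ref{chain-var} to the three-point chain $a<t<b$.

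For the second assertion I would run the equality case of the chain above. If $f(t)$ lies on the bounding circle then $(P-A)\cdot(P-B)=0$, so the identity gives $|P-A|^2+|P-B|^2=|B-A|^2=\kappa(s+u)$; together with $|P-A|^2\le\kappa s$ and $|P-B|^2\le\kappa u$ this can only happen if each of the two inequalities is an equality, that is $|f(t)-f(a)|^2/(t-a)=\kappa$ and $|f(b)-f(t)|^2/(b-t)=\kappa$. Hence the square-linear ratios of the pairs $f(a),f(t)$ and $f(t),f(b)$ are both maximal, as claimed.

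I do not anticipate a genuinely hard step: the argument is a one-line expansion combined with the maximality hypothesis. The only points to watch are the degenerate situations — $f(a)=f(b)$, where $\kappa=0$, the image is a single point and the disk collapses, and $t\in\{a,b\}$, where one of the two auxiliary ratios is undefined — all of which are either vacuous or disposed of by excluding them and using continuity of $f$ for the closedness of the disk.
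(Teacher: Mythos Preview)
Your argument is correct and follows essentially the same route as the paper: both establish the key inequality $|f(t)-f(a)|^2+|f(b)-f(t)|^2\le|f(b)-f(a)|^2$ and read off both conclusions from it, with the equality case giving maximality of the two intermediate ratios. The paper obtains this inequality by invoking Lemma~\ref{chain-var} on the chain $a<t<b$, whereas you derive it directly from the two separate bounds $|P-A|^2\le\kappa s$ and $|P-B|^2\le\kappa u$ and then translate it via the Thales/inner-product criterion; since you also note the Lemma~\ref{chain-var} route explicitly, there is no substantive difference.
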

\begin{proof}
Indeed, for any $t\in(a,b)$, by virtue of the lemma \ref{chain-var}, there is an inequality
\begin{equation}\label{circle-ineq}
 |f(t)-f(a)|^2+|f(b)-f(t)|^2\le (b-a)\max\left \{ \frac{|f(t)-f(a)|^2}{t-a},\frac{|f(b)-f(t)|^2}{b-t}\right \}\le  |f(b)-f(a)|^2,
\end{equation}
 that shows that $f (t)$ lies in the specified circle.
If instead of the inequality we have equality, then
by the same Lemma, we conclude maximal condition on the ratios $\frac{|f(t)-f(a)|^2}{t-a}$
$\frac{|f(b)-f(t)|^2}{b-t}$.
\end{proof}

\begin{lemma}\label{antipodes} Let $f \colon I\to C$ be a continuous map of a segment on a convex two-dimensional plane set that is centrally symmetric with respect to zero. Then there is a point $x$ belonging to the boundary of the set $C$, such that the convex hull of its preimage $f^{-1} (x)$ contains some of the preimages of its antipode $f^{-1}(-x)$.
\end{lemma}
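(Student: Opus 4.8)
The plan is to argue by contradiction and reduce the statement to the connectedness of $\partial C$. Since $C$ is a centrally symmetric convex body, $0$ lies in its interior, the boundary $\partial C$ is a connected (convex) curve, and $x\mapsto-x$ is a fixed-point-free homeomorphism of $\partial C$; and since $f$ maps $I$ onto $C$, for every $x\in\partial C$ the preimage $f^{-1}(x)$ is a nonempty compact subset of $I$. Write $\alpha(x)=\min f^{-1}(x)$ and $\beta(x)=\max f^{-1}(x)$, so that $\operatorname{conv}(f^{-1}(x))=[\alpha(x),\beta(x)]$. Suppose, contrary to the claim, that for every $x\in\partial C$ the set $f^{-1}(-x)$ is disjoint from $[\alpha(x),\beta(x)]$.

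The first thing I would record is that this assumption forces $[\alpha(x),\beta(x)]$ and $[\alpha(-x),\beta(-x)]$ to be disjoint for every $x$: if they met, then $\max\{\alpha(x),\alpha(-x)\}$ would belong to both, yet it is the left endpoint of one of the two intervals, hence a preimage of $x$ or of $-x$ lying in the convex hull of the preimage of the opposite point --- contradicting the assumption at $-x$ or at $x$. Since two disjoint compact segments of $\mathbb R$ are linearly ordered, I may set $\sigma(x)=+1$ when $[\alpha(x),\beta(x)]$ lies to the left of $[\alpha(-x),\beta(-x)]$ and $\sigma(x)=-1$ otherwise; then $\sigma(-x)=-\sigma(x)$, and the sets $U_+=\{x:\beta(x)<\alpha(-x)\}$ and $U_-=\{x:\beta(-x)<\alpha(x)\}$ partition $\partial C$ into two antipodal pieces.

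The crux is to show that $U_+$ and $U_-$ are open in $\partial C$. For this I would verify that $x\mapsto\beta(x)$ is upper semicontinuous and $x\mapsto\alpha(x)$ lower semicontinuous: if $x_n\to x$, then any convergent subsequence of the points $\beta(x_n)\in f^{-1}(x_n)$ has its limit in $f^{-1}(x)$ by continuity of $f$, hence that limit is $\le\beta(x)$, so $\limsup_n\beta(x_n)\le\beta(x)$; the claim for $\alpha$ is symmetric. Consequently $x\mapsto\beta(x)-\alpha(-x)$ is upper semicontinuous, so $U_+=\{x:\beta(x)-\alpha(-x)<0\}$ is open, and $U_-$ is open as the image of $U_+$ under the antipodal homeomorphism. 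Both sets are nonempty: put $a_0=\min f^{-1}(\partial C)$ and $x_0=f(a_0)\in\partial C$, so that $\alpha(x_0)=a_0$; since $f^{-1}(-x_0)\subseteq f^{-1}(\partial C)\subseteq[a_0,\infty)$ and, by the standing assumption, $f^{-1}(-x_0)$ is disjoint from $[\alpha(x_0),\beta(x_0)]=[a_0,\beta(x_0)]$, we obtain $\alpha(-x_0)>\beta(x_0)$, i.e. $x_0\in U_+$; symmetrically $b_0=\max f^{-1}(\partial C)$ yields a point of $U_-$. Thus $\partial C$ decomposes into two nonempty open sets, which contradicts its connectedness, and the lemma follows.

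I expect the only genuine obstacle to be the openness of $U_\pm$: the interval $\operatorname{conv}(f^{-1}(x))$ need not depend continuously on $x$, so $\sigma$ is in general a discontinuous $\{\pm1\}$-valued function on $\partial C$, and one must exploit the one-sided semicontinuity of the endpoints $\alpha,\beta$ --- together with the fact that the strict inequality ``$<0$'' is an open condition for an upper semicontinuous function --- in order to bring the connectedness of $\partial C$ to bear. The remaining ingredients (disjointness of $\operatorname{conv}(f^{-1}(x))$ from $\operatorname{conv}(f^{-1}(-x))$, and the placement of the extreme preimages $a_0,b_0$) are short once one keeps track of which preimage each interval endpoint belongs to.
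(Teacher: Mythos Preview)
Your argument is correct and follows the same route as the paper: assume the contrary, split $\partial C$ into the two ``types'' $U_+$ and $U_-$ according to whether the preimage interval of $x$ precedes or follows that of $-x$, observe that both are open and nonempty, and contradict the connectedness of $\partial C$. The paper's proof is a three-line sketch that simply asserts the openness of each type; your semicontinuity argument for $\alpha$ and $\beta$ is exactly the missing justification, and your use of the connectedness of $\partial C$ (rather than of $I$, as the paper's text somewhat confusingly says) is the right one.
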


\
\begin{wrapfigure}[12]{r}{130pt}
\begin{tikzpicture}[scale=2,>=latex]

\filldraw[red](0,-1.5) node[black,scale=1]{figure 1};

\draw[line width=1pt] (-1,1) -- (1,1);
\draw[line width=1pt] (1,-1) -- (1,1);
\draw[line width=1pt] (1,-1) -- (-1,-1);
\draw[line width=1pt] (-1,1) -- (-1,-1);

\filldraw(1,1) circle (1pt) node[above right = 0.3,black,scale=0.9] {C};
\filldraw(-1,1) circle (1pt) node[above left = 0.3,black,scale=0.9] {B};
\filldraw(1,-1) circle (1pt) node[below right = 0.3,black,scale=0.9] {D};
\filldraw(-1,-1) circle (1pt) node[below left = 0.3,black,scale=0.9] {A};

\draw[line width=1pt,dashed] (-0.2,-1) -- (0.2,1);
\filldraw(-0.2,-1) circle (1pt) node[below  = 0.5,black,scale=0.9] {E};
\filldraw(0.2,1) circle (1pt) node[above = 0.5, black,scale=0.9] {F};


\end{tikzpicture}


\end{wrapfigure}

\begin{proof}
Assuming the opposite, we come to the conclusion that the boundary points are divided into two types. The first --- in which all the points of the preimage precede the points of the preimage of the antipode, and the second --- in which follow them. But it is quite clear that the set of points of the first type is open. The same is true for the second type. But then these sets, due to the connectivity of the segment $I$, must intersect. This gives the desired contradiction.
\end{proof}

For the curve $f\colon I\to I^2$ and the point of the image $V$,
we denote by $V_f$ the first moment of the curve passing through $V$, and by $V^f$ --- the last such moment.
Thus we have the following relations:
\begin{itemize} \label{moments}
\item $V_f\le V^f$
\item $f(V_f)=V=f(V^f)$
\item $f^{-1}(V)\subseteq [V_f,V^f]$
\end{itemize}

\begin{lemma}\label{circulate}If unclosed curve $f\colon [0,1]\to [ABCD]$ maps the unit interval to the unit square
with vertices $ABCD$, so that $A_f<B_f<C_f<D_f$, and has a square-linear ratio $\kappa(f)\le 4$, then one has
\begin{enumerate}
\item $|AB|=|BC|=|CD|=1$
\item $A^f<B_f<B^f<C_f<C^f<D_f$
\item there is a point $E$ on the  side $AD$, such that $E_f< B_f$ and $E^f>C^f$.
\end{enumerate}
\end{lemma}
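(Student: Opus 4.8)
The idea is to let Lemma~\ref{chain-var} together with the bound $\kappa(f)\le 4$ do all the combinatorial work, and to call on surjectivity of $f$ (so that $f$ is genuinely space‑filling) and on Lemma~\ref{circle} only in the few borderline cases where the chain inequality is saturated. For part~(1), feed the chain $A_f<B_f<C_f<D_f$ into Lemma~\ref{chain-var}: since $D_f-A_f\le 1$ and the maximal ratio is $\le\kappa(f)\le 4$, one gets $|AB|^2+|BC|^2+|CD|^2\le 4$. Each summand is $1$ (adjacent vertices) or $2$ (opposite vertices), so at most one of them is $2$. If none is, $A-B-C-D$ is a path of sides in the $4$-cycle of vertices, hence the vertices sit in cyclic order and $AB,BC,CD,DA$ are all sides, which is~(1). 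The options ``$AB$ a diagonal'' and ``$CD$ a diagonal'' are excluded combinatorially (either forces $D$, resp. $A$, to repeat a vertex). The one case surviving pure bookkeeping is ``$BC$ a diagonal'': then the inequality above is an equality, so by the equality clause of Lemma~\ref{chain-var} we have $A_f=0$, $D_f=1$ and all three link ratios equal $4$, whence $B_f=\tfrac14$, $C_f=\tfrac34$. Here surjectivity finishes it: for small $\varepsilon>0$ some $t_\varepsilon$ has $f(t_\varepsilon)$ equal to the boundary point at distance $\varepsilon$ from $D$, and comparing $t_\varepsilon$ with the instants $0$ and $\tfrac34$ (images $A$ and $C$) via $\kappa(f)\le 4$ squeezes $t_\varepsilon\to\tfrac12$; by continuity $f(\tfrac12)=D$, contradicting $D_f=1$.

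For part~(2), use~(1): all of $AB,BC,CD,DA$ have square-length $1$ and the diagonals $AC,BD$ have square-length $2$. Extract the order of first passages by a cascade of five-point chains. If $A^f\ge D_f$, the chain $A_f<B_f<C_f<D_f<A^f$ has link-sum $4$, forcing $A^f-A_f=1$, i.e. $f(0)=f(1)=A$, impossible since $f$ is unclosed; so $A^f<D_f$. Then if $A^f\ge C_f$, the chain $A_f<B_f<C_f<A^f<D_f$ has link-sum $1+1+2+1=5>4$; so $A^f<C_f$. Then if $A^f\ge B_f$, the chain $A_f<B_f<A^f<C_f<D_f$ again has link-sum $5$; so $A^f<B_f$. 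Running the analogous chains through $B$ gives $B^f<D_f$ (chain $A_f<B_f<C_f<D_f<B^f$, link-sum $5$) and then $B^f<C_f$ (chain $A_f<B_f<C_f<B^f<D_f$, link-sum $5$). Finally, if $C^f\ge D_f$, the chain $A_f<B_f<C_f<D_f<C^f$ has link-sum $4$, which pins down $A_f=0$, $B_f=\tfrac14$, $C_f=\tfrac12$, $D_f=\tfrac34$, $C^f=1$; then each of $f|_{[0,1/4]},f|_{[1/4,1/2]},f|_{[1/2,3/4]},f|_{[3/4,1]}$ realises its square-linear ratio, equal to $4$, on the pair of endpoints, so by Lemma~\ref{circle} each lies in the disk having that pair as diameter, and the union of the three half-disks cut from the square over $AB,BC,CD$ omits the midpoint of side $DA$, contradicting surjectivity; hence $C^f<D_f$. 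This establishes $A^f<B_f\le B^f<C_f\le C^f<D_f$; the strictness of the two middle inequalities (that $B$ and $C$ are each met more than once) follows from surjectivity near the corner — if $B$ were met only at $\beta$, the images of $[0,\beta-\eta]$ and $[\beta+\eta,1]$ would stay a fixed positive distance from $B$ while $f([\beta-\eta,\beta+\eta])$ lies within $2\sqrt\eta$ of $B$, leaving an uncovered collar around $B$.

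For part~(3), normalise so that $A=(0,0)$, $B=(1,0)$, $C=(1,1)$, $D=(0,1)$, and write $P_e=(0,e)$ for the generic point of side $AD$. Set $U=\{e\in[0,1]:(P_e)_f<B_f\}$ and $V=\{e\in[0,1]:(P_e)^f>C^f\}$. Both are closed: a limit of witnessing instants has image $P_e$ and cannot equal $B_f$ or $C^f$ (whose images $B$, $C$ lie off side $AD$), so the strict inequality survives the passage to the limit. Also $0\in U$ (vertex $A$ is met at $A_f<B_f$) and $1\in V$ (vertex $D$ is met at $D_f>C^f$ by~(2)). It then suffices to show $U\cup V=[0,1]$: if some $P_e$ were met only inside $[B_f,C^f]$, comparing one such instant with $B_f$ and with $C^f$ via $\kappa(f)\le 4$ gives $C^f-B_f\ge\frac{2+e^2+(1-e)^2}{4}\ge\frac58$, whereas $B_f\ge A_f+\tfrac14\ge\tfrac14$ and $C^f\le D^f-\tfrac14\le\tfrac34$ force $C^f-B_f\le\tfrac12$ — a contradiction. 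Hence $U\cup V=[0,1]$, and since $[0,1]$ is connected while $U,V$ are closed with $0\in U$, $1\in V$, we have $U\cap V\neq\varnothing$; any $E=P_e$ with $e\in U\cap V$ is the required point. (Lemma~\ref{antipodes} offers a more symmetric-looking but, to my taste, less transparent alternative.)

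The step I expect to be the genuine obstacle is exactly this handful of saturated cases — ``$BC$ a diagonal'' in~(1), ``$C^f\ge D_f$'' in~(2), and the strictness $B_f<B^f$, $C_f<C^f$ — where chain arithmetic runs out and one has to combine surjectivity with Lemma~\ref{circle} or with the elementary ``uncovered collar'' estimate; by contrast, the placement of four vertices in a square and the cascade of five-point chains are routine finite searches.
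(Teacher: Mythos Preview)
Your proof is correct, and parts~(1) and~(2) follow the same chain-cascade as the paper, differing only in how the saturated boundary cases are finished. For ``$BC$ a diagonal'' the paper confines $f([C_f,1])$ to the half-disk over $CD$ via Lemma~\ref{circle} and observes that this half-disk misses part of every square-neighbourhood of $D$, forcing $D\in f([0,C_f])$; your direct squeeze works too, but only if the boundary point at distance $\varepsilon$ from $D$ is taken on the side through $B$ (then $|q_\varepsilon-C|^2=1+\varepsilon^2>1$, so the branch $t_\varepsilon\ge\tfrac34+\tfrac{1+\varepsilon^2}{4}>1$ is vacuous and $t_\varepsilon$ is genuinely trapped near $\tfrac12$); on the side through $C$ one has $|q_\varepsilon-C|^2=(1-\varepsilon)^2<1$ and the squeeze leaks to $t_\varepsilon\to1$, so you should specify the side. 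For $C^f<D_f$ the paper merely writes ``same as $A^f<B_f$'', whereas your three-half-disk covering is the more honest argument (and your collar estimate for the strictness $B_f<B^f$, $C_f<C^f$ is not addressed in the paper at all).

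Part~(3) is where you genuinely diverge. The paper invokes Lemma~\ref{antipodes} to produce a centrally symmetric boundary pair $E,F$ with $E_f<F_f<E^f$, and then runs a lengthy case analysis over which side $E$ lies on and how $E_f,F_f,E^f$ interleave with $A_f,B_f,C_f,D_f$, each bad configuration yielding a chain with link-sum exceeding $4$. Your closed-sets argument on side $AD$, driven by the single estimate $C^f-B_f\ge\tfrac{2+e^2+(1-e)^2}{4}\ge\tfrac58>\tfrac12$, replaces that entire case analysis and never touches Lemma~\ref{antipodes}. Your route is shorter and, as you say, more transparent; the paper's route has the incidental byproduct of locating the antipodal point $F$ on side $BC$ with $E_f<F_f<E^f$, but the lemma as stated makes no use of $F$.
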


\begin{proof}
  We prove the first statement.
If we assume that $AB$ is the diagonal of the square, then so is $CD$. In this case, the sum of the squares of the links of the chain $ABCD$
is $2+1+2$, which leads to a violation of the condition $\kappa (f)\le 4$.
Similarly, it is shown that the diagonal cannot be $CD$.
Suppose now that the diagonal is $BC$. In this case, the sum of the squares of the lengths of the $ABCD$ chain is 4.
And the  Lemma \ref{chain-var}
allows us to state that the square-linear ratios of the pairs $A_f, B_f)$, $B_f, C_f$, $C_f, D_f$ are all maximal and
equal to four, and in addition $A_f=0$, $D_f=1$.
According to the Lemma  \ref{circle}, the image of the segment $[C_f, 1]$ is contained in a semicircle with $ CD$ as a diameter. Since this semicircle
does not contain any neighborhood of the vertex $D$ in the square $ABCD$, then any neighborhood of this vertex
contains points belonging to the image of the segment $[0, C_f]$.
And the closeness of this image allows us to say that in this case, it contains the point $D$ itself, which contradicts the choice of $D_f$
as the smallest element of the preimage $f^{-1}(D)$. Thus, the validity of the first statement of the lemma is established.

To proof  the second statement suppose the opposite.
Suppose, for example, that $D_f> A^f>B_f$. In this case, we get increasing chains
$A_f<B_f<A^f<C_f<D_f$ or $A_f<B_f<C_f<A^f<D_f$, with the sum of the squares of the distances between their images equal to five.
If we assume that $D_f<A^f$, then we get $A^f=1$ and $A_f=0$, that is, that the curve $f$ is closed, contrary to the conditions of the lemma.
If we assume that $B^f>C_f$, we get the chains $A_f<B_f<C_f<B^f<D_f$ or $A_f<B_f<C_f<D_f<B^f$ with the corresponding  sum of the squares of the distances equal to five.
And the inequality, $C^f<D_f$ it is justified in exactly the same way as the inequality $A^f<B_f$.

Let us proceed to the proof of the third statement of the lemma.
By virtue of the  lemma \ref{antipodes}, there are points $E$ and $F$ that belong to the boundary of the square and are centrally symmetric,
that $E_f<F_f<E^f$.
Suppose that $E$ lies on the side of $AB$.
If we assume that $E_f>B_f$, then for $E^f<C_f$ we have a chain $ABEFECD$ on the curve with the sum of squares greater than four,
and for $E^f>C_f$, we have chains $ABCED$ or $ABCDE$, both with the sum of squares greater than four.
Hence, if the square-linear ratio of the curve does not exceed four,
then $E_f$ must lie between $A_f$ and $B_f$. But in this case, $F_f$ no longer lies in this interval,
because otherwise the chain $AFBCD$ would give the sum of squares more than four. If we assume that $E^f<C_f$, then the sum of squares is greater than four
gives the chain $ABFECD$. All possible chains of $AEB(FC) (ED)$, obtained by rearranging the letters inside the brackets in places, give the sum of squares greater than four. Thus, in all cases, the assumption that $E$ lies on $AB$ leads us to find a chain with a sum of squares of more than four. To the same conclusion, even shorter, we are led by the assumption that $E$ belongs to $BC$ or $CD$.
So the only remaining possibility for $E$ --- is on the $AD$side.

We will show that $E_f<B_f$. The opposite assumption gives a chain $A_f, B_f, E_f, F_f, E^f$ with the sum of the squares of the distances between images greater than four.
And the inequality $E^f>C^f$ is justified by the appearance of the chain $E_f, B_f, E^f, C^f, D_f$ otherwise.
Thus, $E$ is the desired point of the curve.
\end{proof}

\begin{theorem}
  \label{dif-side} If mapping a unit segment to a unit square translates the ends of the segment to opposite sides,
then there is a pair of points whose square-linear ratio is greater than four.
\end{theorem}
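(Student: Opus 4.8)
The plan is to assume $\kappa(f)\le 4$ and exhibit a pair of points with ratio strictly above $4$, a contradiction. Since $f$ is onto the square and $f(0),f(1)$ lie on opposite (hence disjoint) sides, $f$ is unclosed, so Lemma~\ref{circulate} applies after ordering the vertices by first passage: by (the argument proving) part~(1) this order is a cyclic order of the square, so we may name the vertices $A,B,C,D$ with $A_f<B_f<C_f<D_f$ and $|AB|=|BC|=|CD|=1$. Part~(2) gives $A^f<B_f<B^f<C_f<C^f<D_f$ (whence also $0\le A_f$, $D_f\le1$), and part~(3) provides a point $E$ on the side $DA$ with $E_f<B_f$ and $E^f>C^f$; write $|AE|=e\in(0,1)$, so $|AE|^2=e^2$, $|DE|^2=(1-e)^2$, $|BE|^2=1+e^2$, $|CE|^2=1+(1-e)^2$. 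I will also use that the reversal $t\mapsto f(1-t)$ keeps $\kappa$ and the square, swaps $V_f\leftrightarrow 1-V^f$, and --- again by Lemma~\ref{circulate}(1) --- merely renames $A,B,C,D$ as $D,C,B,A$ (here one uses that part~(2) forces the last-passage order of the vertices to coincide with the first-passage order, since $A^f<B_f\le B^f<C_f\le C^f<D_f\le D^f$). Thus the cases for the pair of sides carrying $f(0),f(1)$ are: \emph{(a)} $DA$ and $BC$; \emph{(b)} $CD$ and $AB$ with $f(0)\in CD$; \emph{(c)} $f(0)\in AB$, $f(1)\in CD$.

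In case (a), reversing if needed I may take $f(0)\in BC$. Then $f(0)\notin\{A,B,C\}$ ($f(0)=B$ or $C$ would give $B_f=0$ or $C_f=0$, below $A_f\ge 0$; and $A\notin BC$), so $|f(0)-A|^2>1$, every point of $BC$ other than $B$ being at distance $>1$ from $A$. The chain $0<A_f<B_f<C_f<D_f$ then gives $\sum|f(t_i)-f(t_{i-1})|^2=|f(0)-A|^2+3>4\ge 4(D_f-0)$, contradicting Lemma~\ref{chain-var}. In case (b), $f(0)$ and $f(1)$ lie strictly between the endpoints of their sides (the vertices being excluded as before, now also using $A^f<B_f\le1$ and $B^f<C_f\le1$, and $A,D$ not lying on $CD,AB$ respectively), so $|f(0)-A|^2>1$ and $|f(1)-D|^2>1$, and the chain $0<A_f<B_f<C_f<D_f<1$ gives $\sum|f(t_i)-f(t_{i-1})|^2=|f(0)-A|^2+3+|f(1)-D|^2>5$, again a contradiction with Lemma~\ref{chain-var}.

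Case (c) is the heart of the matter. Here I would take a chain from $0$ to $1$ visiting $A,B,C,D$ at their first moments and visiting $E$ at both of its passages $E_f$ and $E^f$, inserting each $E$ in the right slot (before or after $A_f$; before or after $D_f$), e.g. $0,E_f,A_f,B_f,C_f,D_f,E^f,1$ when $E_f<A_f$ and $E^f>D_f$ (validity of each chain uses $E_f<B_f$, $E^f>C^f$ and $f(0),f(1)\notin\{E\}$). All four positional combinations give the same total, because $|f(0)-E|^2=|f(0)-A|^2+e^2$, $|f(1)-E|^2=|f(1)-D|^2+(1-e)^2$, and $|BE|^2+|CE|^2=|AB|^2+|CD|^2+e^2+(1-e)^2$; namely
\[
\sum|f(t_i)-f(t_{i-1})|^2=|f(0)-A|^2+|f(1)-D|^2+3+2e^2+2(1-e)^2\ \ge\ 4 ,
\]
since $2e^2+2(1-e)^2=(2e-1)^2+1\ge1$. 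As the chain has length $1$, Lemma~\ref{chain-var} forces equality, i.e. $f(0)=A$, $f(1)=D$, $e=\tfrac12$, and every link ratio equals $4$.

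To rule out this remaining configuration, note $A_f=0$ and read off the ratios of the links $(A,E)$, $(E,B)$, $(B,C)$ (consecutive in every surviving chain): $|AE|^2/E_f=4$, $|BE|^2/(B_f-E_f)=4$, $|BC|^2/(C_f-B_f)=4$ with $|AE|^2=\tfrac14$, $|BE|^2=\tfrac54$, $|BC|^2=1$ give $E_f=\tfrac1{16}$, $B_f=\tfrac38$, $C_f=\tfrac58$. But Lemma~\ref{circulate}(2) gives $B_f<B^f<C_f$, so $C_f-B^f<C_f-B_f=\tfrac14$, and therefore the pair of moments $B^f<C_f$, with images $B$ and $C$, has ratio $|BC|^2/(C_f-B^f)>4$ --- the contradiction sought. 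The main obstacle I anticipate is precisely this last step: the chain through the four vertices and the two visits of $E$ only delivers the non-strict bound $4$, and one has to observe that in the single equality configuration the passage moments are pinned down and that the extra return to $B$ supplied by Lemma~\ref{circulate}(2) then overshoots $4$. The rest is routine bookkeeping on the orders of $E_f,A_f$ and of $E^f,D_f$ in case (c), together with the elementary estimates of cases (a) and (b).
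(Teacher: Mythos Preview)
Your reduction to the ``equality configuration'' $f(0)=A$, $f(1)=D$, $|AE|=\tfrac12$ (your case~(c)) is correct and is essentially the paper's reduction as well; the chain bookkeeping and the elimination of cases~(a) and~(b) are fine.

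The gap is in your last paragraph. You invoke Lemma~\ref{circulate}(2) for the \emph{strict} inequality $B_f<B^f$ (``the extra return to $B$''). But look at the proof of that lemma: it only rules out $A^f\ge B_f$, $B^f\ge C_f$, and $C^f\ge D_f$. Nothing there forces $B$ to be visited more than once; the chain in item~(2) should really read $A^f<B_f\le B^f<C_f\le C^f<D_f$. If $B$ happens to be hit exactly once, then $B^f=B_f=3/8$ and your pair $(B^f,C_f)$ gives ratio exactly $4$, not $>4$, so the contradiction evaporates. (The same objection applies if you try $A^f$ or $C^f$ instead.) In short, the single ingredient you rely on to break the equality case is not available.

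The paper closes the equality case by a different, genuinely geometric argument. Since every link of the extremal chain has ratio exactly $4$, Lemma~\ref{circle} confines each arc $f[t_{i-1},t_i]$ to the disk on the corresponding chord as diameter. From the fact that the disk on $AO$ (your $AE$) misses a neighbourhood of $A$ in the square, the paper deduces that $A$ must be visited again at some $\alpha\in[O_f,B_f]$; symmetrically $D$ is visited at some $\delta\in[C_f,O^f]$. Inserting these revisits refines the chain so that the covering disks become those on $AO$, $AB$, $BC$, $CD$, $OD$, and one checks that their union omits a neighbourhood of the midpoint $O$ of $AD$ --- contradicting surjectivity. Note that your chain, without the revisits of $A$ and $D$, yields the disk on $EB$ rather than on $AB$; the disk on $EB$ actually \emph{does} contain a neighbourhood of $B$ (and, together with the disks on $AE$ and $ED$, a neighbourhood of $E$) in the square, so no contradiction comes from your five disks directly. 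The extra step of producing $\alpha$ and $\delta$ is exactly what replaces your unproven ``extra return to $B$''.
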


\
\newcount\s
\begin{wrapfigure}[13]{r}{120pt}
\s=2
\begin{tikzpicture}[scale=\s]

\filldraw[red](1,-0.5) node[black,scale=\s*0.5]{figure 2};

 \draw[line width=0.5pt](0,0)--(2,0);
\draw[line width=0.5pt](2,0)--(2,2);
 \draw[line width=0.5pt](2,2)--(0,2);
\draw[line width=0.5pt](0,2)--(0,0);

 \draw[line width=0.5pt,dash pattern=on 1.5pt off 3pt] (1.98,2) arc [start angle=0,end angle=-176,radius=0.98];
 \draw[line width=0.5pt,dash pattern=on 1.5pt off 3pt] (1.98,1.98) arc [start angle=90,end angle=270,radius=0.98];
 \draw[line width=0.5pt,dash pattern=on 1.5pt off 3pt] (0,1.98) arc [start angle=90,end angle=-90,radius=0.98];

 \draw[line width=0.5pt,dash pattern=on 1.5pt off 2.5pt] (1.98,0) arc [start angle=0,end angle=181,radius=0.5];

 \draw[line width=0.5pt,dash pattern=on 1.5pt off 2.5pt] (1,0) arc [start angle=0,end angle=181,radius=0.5];

\filldraw[](0,2.2)node[scale=\s*0.5]{B};
 \filldraw[](2,2.2)node[scale=\s*0.5]{C};

 \filldraw[](0,-0.2)node[scale=\s*0.5]{A};
 \filldraw[](2,-0.2)node[scale=\s*0.5]{D};
 \filldraw[](1,-0.2)node[scale=\s*0.5]{O};

\end{tikzpicture}
\end{wrapfigure}

\begin{proof}
Suppose, on the contrary, that the curve $p(t)$ satisfies the conditions of the lemma and has a SLR of at most four.
In this case, based on the lemma \ref{circle}, we denote the vertices of the square of the image in the order of their passage through the
curve with the letters $ABCD$, and we have
$AB=BC=CD=AD=1$.
In addition, it is clear that the beginning of the curve is on the side of $AB$, and the end --- on $CD$.
According to the  Lemma \ref{circle}, $AD$ has a point $O$ traversed twice. First time before $B$, second time --- after $C$.
If we denote by $S$ the starting point of the curve, and by $E$ --- the end point, then depending on the order of passage
points $O$ we get four options for passing the seven listed points SOABCDOE, SAOBCDOE, SOABCODE, SAOBCODE

Since $SO^2+O A^2+AB^2=SA^2+AO^2+OB^2$ and $CO^2+OD^2+DE^2=CD^2+DO^2+OE^2$, so far in all embodiments, the sum of squares
lengths of chain to be expressed in the following way
 \begin{equation}\label{SOE}
SA^2+2x^2+1+1+1+2(1-x)^2+ED^2,
\end{equation} which
takes its minimum value of 4 only when
\begin{equation}\label{condit}
S=A, E=D, x=\frac12
\end{equation}
In this case, the square-linear ratio for all links of the chain is the maximum possible and is equal to 4 by virtue of the lemma
 \ref{chain-var}.
So the \eqref{condit} conditions are met anyway.

We now prove that the vertices A and D are traversed twice, namely, the point A must be traversed again after O,
and the point $D$ --- before the point $O$, for all the described traversal options.

Consider the $AOB$ movement option. Since the square-linear ratio of the pair $AO$ is maximal, then according to the Lemma \ref{circle}, the
image of the segment $[A_p, O_p]$ is contained in a circle with a diameter of AO. Since this circle does not contain any
neighborhood of the vertex A in the square ABCD, the points arbitrarily close to A belong to the image $p[O_p, B_p]$.
And the closeness of this image implies the existence of such a $ \alpha\in [O_p, B_p]$ that $p (\alpha)=A$.
Similarly, we justify the existence of $ \delta\in [C_p, O^p]$, for which $p (\delta)=D$.

By virtue of the  lemma \ref{circle} on pairs of curve points corresponding to the arguments $O_p, A_p$, $A_p,B_p$,
$O^p, D_p$ and $C_p, D_p$ the square-linear ratio is maximal, i.e. equal to four.
Since $$[O_p, B_p]=[O_p,A_p]]\cup [A_p,B_p],$$ in so far as, by virtue of the lemma \ref{circle}, we
obtain that the image of the segment $[O_p, B_p]$ lies in the union of two circles, the first of which has a diameter of the side
the square $AB$, and the second --- segment $AO$. The last circle also contains the image $[A_p, O_p]$.
Similar reasoning shows that the segment of the curve $[C_p, D_p]$ is contained in the union of two circles: the first
has a diameter of the side $CD$, and the second --- the segment $OD$. And as cut curve $[B_p,C_p]$ is contained in the circle
with a diameter of $CD$, we get that the whole curve must lie in the Union of five circles with centres on the sides of the square ABCD:
the three circles having the diameters of the side $AB$, $BC$ and $CD$, and two with diameters $AO$ and $OD$. But the totality of these
the circle does not cover the entire square. For example, no neighborhood of the point $O$ is covered. Thus, we have obtained a contradiction that
proves the theorem.
\end{proof}

Similar arguments to the above allow us to briefly prove that the SLR of any mapping of a unit segment to a
unit square is strictly greater than $3.5$. The following theorem allows us to refine this result.
\begin{theorem}If the continuous map $f \colon I\to I^2 $ maps a unit segment to a unit square, then there is a pair of points $x, y\in I$ such that
$$|f(x)-f(y)|^2> 3\frac58 |x-y|.$$
\end{theorem}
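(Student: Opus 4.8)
The goal is to sharpen the classical bound $\kappa\ge 3.5$ to $\kappa > 3\tfrac58$, so I must rule out the possibility that a unit-segment-to-unit-square map $f$ has $\kappa(f)\le 3\tfrac58$. The strategy mirrors the proof of Theorem~\ref{dif-side}: assume for contradiction that $\kappa(f)\le 3\tfrac58$, extract a chain of boundary points of the square whose images force a large sum of squared link-lengths via Lemma~\ref{chain-var}, and then use Lemma~\ref{circle} to confine the image of the curve to a union of small circles that cannot cover the square — the final contradiction. Here, however, the ends of the curve are not assumed to lie on opposite sides, so I cannot start from the conclusion $AB=BC=CD=AD=1$ of Lemma~\ref{circulate}; I instead work with the four vertices $A,B,C,D$ of the square listed in order of first visit, i.e. $A_f<B_f<C_f<D_f$, and their first/last moments $V_f, V^f$.

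\emph{First step: locate the vertices.} Since $\kappa(f)\le 3\tfrac58<4$, the hypotheses of Lemma~\ref{circulate} are met, so $|AB|=|BC|=|CD|=1$ (the chain $ABCD$ is three unit sides), we have the ordering $A^f<B_f<B^f<C_f<C^f<D_f$, and there is a point $E$ on side $AD$ with $E_f<B_f$ and $E^f>C^f$. Thus along the curve we encounter the points in the order $A,\dots,E,\dots,B,\dots,C,\dots,E,\dots,D$ (with $A$ possibly revisited). The key numeric input is that the chain $A_f<E_f<B_f<C_f<E^f<D_f$ has image-links of squared lengths $|AE|^2, |EB|^2, |BC|^2=1, |CE|^2, |ED|^2$. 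Writing $|AE|=x$ so $|ED|=1-x$ (since $E\in AD$, $|AD|=1$), and using $|EB|^2 = x^2+1$, $|CE|^2=(1-x)^2+1$ (since $B$ and $C$ are the two far corners over the unit strip), the total is $x^2+(x^2+1)+1+((1-x)^2+1)+(1-x)^2 = 2x^2+2(1-x)^2+3$, minimized at $x=\tfrac12$ with value $3\tfrac12$. This alone gives only $\kappa\ge 3.5/1 = 3.5$, not enough — so I must squeeze out more, either by choosing a longer chain or by exploiting that the curve must also \emph{fill} the square.

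\emph{Second step: the covering argument.} This is where the extra $\tfrac18$ comes from and where the main obstacle lies. Following the template of Theorem~\ref{dif-side}: once equality (or near-equality) in Lemma~\ref{chain-var} is forced on all links, Lemma~\ref{circle} pins the image of each segment $[t_{i-1},t_i]$ inside a circle having the corresponding image-link as diameter. The segments $[A_f,E_f],[E_f,B_f],[B_f,C_f],[C_f,E^f],[E^f,D_f]$ then confine $f(I)$ to a union of five disks with those diameters. One checks — this is the geometric heart — that these five disks, for $x=\tfrac12$, still fail to cover the unit square (e.g. a neighborhood of a point near the middle of a side, or near a corner $A$ or $D$, escapes), giving a contradiction when $\kappa(f)=3\tfrac12$ exactly. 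To upgrade the contradiction to the \emph{strict} inequality $\kappa(f)> 3\tfrac58$, I need to show that if $\kappa(f)\le 3\tfrac58$ then one can still find a sub-chain (possibly refining $A$'s double visit $A_f<E_f<B_f<\dots$, or inserting the revisited copy of $A$, so that the chain has total squared length exceeding $3\tfrac58\cdot 1$, or else an additional boundary point forced by Lemma~\ref{antipodes} produces such a chain). Concretely, I expect the refined chain to be something like $A_f<E_f<B_f<C_f<E^f<A^f<D_f$ or a chain passing through the midpoint $O$ of $AD$ twice as in Theorem~\ref{dif-side}, whose squared-length sum, after optimizing the free parameter(s), bottoms out at exactly $3\tfrac58$, which is then incompatible with the strict non-covering of the square.

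\emph{Main obstacle.} The delicate part is the bookkeeping that turns the soft bound $3.5$ into the sharp $3\tfrac58$: I must identify precisely which finitely many chains of boundary points are forced by Lemmas~\ref{circulate} and~\ref{antipodes}, compute the minimum over the relevant geometric parameters of the sum of squared link-lengths for each, and verify that in the single worst case the associated union of diameter-disks provably omits a neighborhood of some square point. Closing that last covering gap quantitatively — showing the omitted region is non-empty for every admissible configuration, not just the symmetric one — is the step I expect to require the most care, and it is what forces the estimate to be $3\tfrac58$ rather than a cleaner number. I would carry it out by reducing, via the equality case of Lemma~\ref{chain-var}, to a one-parameter family of configurations and then exhibiting an explicit escaping point as a function of that parameter.
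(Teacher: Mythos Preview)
Your setup via Lemma~\ref{circulate} is correct, but two things go wrong afterwards. First, a bookkeeping slip: Lemma~\ref{circulate} only gives $E_f<B_f$ and $E^f>C^f$, not $A_f<E_f$ or $E^f<D_f$. The ordering you assume, $A_f<E_f<B_f<C_f<E^f<D_f$, is one of the \emph{easy} cases --- and your own sum $2x^2+2(1-x)^2+3$ is minimized at $x=\tfrac12$ with value $4$, not $3.5$ (recheck the arithmetic). The configuration that actually produces the classical $3.5$ bound, and the one you must defeat, is $E_f<A_f<B_f<C_f<D_f<E^f$: the chain $EABCDE$ has link-square sum $(\tfrac12-x)^2+1+1+1+(\tfrac12+x)^2=3\tfrac12+2x^2$, where $x$ is the offset of $E$ from the midpoint of $AD$.

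Second, and this is the genuine gap, the mechanism you propose for the extra $\tfrac18$ --- forcing equality in Lemma~\ref{chain-var} and then invoking the disk-covering of Lemma~\ref{circle} --- cannot start: if $\kappa(f)\le 3\tfrac58$ while the chain $EABCDE$ sums only to $3\tfrac12$, equality in Lemma~\ref{chain-var} is \emph{not} forced on any link, so Lemma~\ref{circle} yields no confinement. The paper uses no covering argument here. The idea you are missing is to introduce a \emph{second} auxiliary point $G$ on the vertical midline $\{\tfrac12\}\times[0,1]$: the closed sets $f[A_f,D_f]$ and $f[0,A_f]\cup f[D_f,1]$ together cover this segment, and each misses one of its endpoints (else a longer chain already gives sum $\ge 4$), so by connectedness they meet at some $G$. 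One then has a chain through $E$, $G$, and the four vertices whose link-square sum is a quadratic form in $x$ and the height $y$ of $G$; minimizing over the handful of possible orderings yields $3\tfrac23$, $3\tfrac78$, and in the two worst cases exactly $3\tfrac58$, directly from Lemma~\ref{chain-var}. Your candidate extra points (the midpoint of $AD$, or a revisit of $A$) do not produce a chain with the required sum; it is the midline point $G$, located by the connectedness argument, that carries the proof.
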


\
\begin{wrapfigure}[13]{r}{130pt}
\begin{tikzpicture}[scale=2,>=latex]

\filldraw[red](0,-1.5) node[black,scale=1]{figure 3};

\draw[line width=1pt] (-1,1) -- (1,1);
\draw[line width=1pt] (1,-1) -- (1,1);
\draw[line width=1pt] (1,-1) -- (-1,-1);
\draw[line width=1pt] (-1,1) -- (-1,-1);

\filldraw(1,1) circle (1pt) node[above right = 0.3,black,scale=0.9] {C};
\filldraw(-1,1) circle (1pt) node[above left = 0.3,black,scale=0.9] {B};
\filldraw(1,-1) circle (1pt) node[below right = 0.3,black,scale=0.9] {D};
\filldraw(-1,-1) circle (1pt) node[below left = 0.3,black,scale=0.9] {A};

\draw[line width=1pt,dashed] (-0.2,-1) -- (0.2,1);
\filldraw(-0.2,-1) circle (1pt) node[below  = 0.5,black,scale=0.9] {E};
\filldraw(0.2,1) circle (1pt) node[above = 0.5, black,scale=0.9] {F};

\draw[line width=1pt,dashed] (0,-1) -- (0, 1);
\filldraw(0,-0.6) circle (1pt) node[right = 0.5, black,scale=0.9] {G};


\end{tikzpicture}


\end{wrapfigure}

\begin{proof}
By virtue of the  lemma\ref{circle}, in the future we will assume that $E$ belongs to the side of $AD$. Let the point $E$ be shifted from the center of the side of the square to the left by $x$. The described situation is shown in the figure.

If we assume that $E_f>A_f$, then there are four variants of the chains $A(BE) C(DE)$. For each of them, the sum $\sigma $ of squares of the lengths of the chain links is estimated from below as:

$ABECDE:$ $\sigma\geq4$;

$ABECED:$ $\sigma\geq4$;

$AEBCDE:$ $\sigma=3x^2-x+3\frac34\geq3\frac23$;

$AEBCED:$ $\sigma\geq4$.

\noindent So we are left to consider the case where $E_f<A_f$. In the same way, the analysis of cases with a possible location of the moment $E^f$ leaves only the case $E^f>D_f$ to consider.

The sets $f[A_f,D_f]$ and $f[0,A_f]\cup f[D_f,1]$ are closed and cover the entire square and, in particular, cover the segment $\{0.5\}\times [0,1]$. The first of them does not contain the middle of the bottom side of the square, and the mid --- upper. Otherwise, by adding this midpoint in the chain $EABCDE$ between the corresponding letters, we will get an increase in the sum of the squares of the chain by at least $\frac12$, that is, up to four. Therefore, there is a point $G$ on this segment that belongs to the intersection of these sets. Let the point $G$ be located at the distance $y$ from the lower side of the square.

The point $G$ belongs to $f[A_f, D_f]\cap f[0,A_f]$ or $f[A_f, D_f]\cap f[D_f, 1]$. Let's analyze the case when $G$ belongs to $f[A_f, D_f]\cap f[0, A_f]$. The other case is treated similarly.

There are two possible cases: $G_f<E_f$ and $G_f>E_f$.

1. Consider the case of $G_f<E_f$, which in turn is divided into three sub-cases, depending on which vertices of the square $G$ was traversed a second time.

1a. Let's analyze the sub-case when $G^f$ is between $A_f$ and $B_f$. Let's estimate the sum of squares of the $GEAGBCDE $ chain from below. The sum  $\sigma$ is numerically expressed in the following quadratic form:
$$(x^2+y^2)+(1/2-x)^2+(1/4+y^2)+(1/4+(1-y)^2)+2+(1/2+x)^2,$$ whose
minimum is reached at $x=0$ and $y= \frac13$ and is equal to $3\frac{2}{3}$.

1b. Let's analyze the sub-case when $G^f$ is between $B_f$ and $C_f$. Let's estimate the sum of squares of the chain $GEABGCDE$ from below. The sum  $\sigma$ is numerically expressed in the following quadratic form:
$$(x^2+y^2)+(1/2-x)^2+1+(1/4+(1-y)^2)+(1/4+(1-y)^2)+1+(1/2+x)^2,$$ whose
minimum is reached at $x=0$ and $y= \frac23$ and is equal to $3\frac{2}{3}$.

1b. Let's analyze the sub-case when $G^f$ is between $C_f$ and $D_f$. Let's estimate the sum of squares of the chain $GEABCGDE$ from below. The sum  $\sigma$ is numerically expressed in the following quadratic form:
$$(x^2+y^2)+(1/2-x)^2+2+(1/4+(1-y)^2)+(1/4+y^2)+(1/2+x)^2,$$
the minimum of which is reached at $x=0$ and $y= \frac13$ and is equal to $3\frac{2}{3}$.

2. Consider the case $G_f>E_f$, which in turn is divided into three sub-cases, depending on which vertices of the square $G$ was traversed a second time.

2a. Let's analyze the sub-case when $G^f$ is between $A_f$ and $B_f$. Let's estimate the sum of squares of the chain $EGAGBCDE$ from below. The sum  $\sigma$ is numerically expressed in the following quadratic form:
$$(x^2+y^2)+2 (1/4+y^2)+(1/4+(1-y)^2)+2+(1/2+x)^2,$$ whose
minimum is reached at $x=-\frac14$ and $y=\frac14$ and is equal to $3\frac{5}{8}$.

2b. Let's analyze the sub-case when $G^f$ is between $B_f$ and $C_f$. Let's estimate the sum of squares of the chain $EGABGCDE$ from below. The sum of $\sigma$ is numerically expressed in the following quadratic form:
$$(x^2+y^2)+(1/4+y^2)+1+2(1/4+(1-y)^2)+1+(1/2+x)^2,$$ whose
minimum is reached at $x=-\frac14$ and $y=\frac12$ and is equal to $3\frac{7}{8}$.

2b. Let's analyze the sub-case when $G^f$ is between $C_f$ and $D_f$. Let's estimate the sum of squares of the chain $EGABCGDE$ from below. The sum $\sigma$ is numerically expressed in the following quadratic form:
$$(x^2+y^2)+(1/4+y^2)+2+(1/4+(1-y)^2)+(1/4+y^2)+(1/2+x)^2,$$
the minimum of which is reached at $x=-\frac14$ and $y=\frac14$ and is equal to $3\frac{5}{8}$.

\end{proof}

\end{document}